\newtheorem{theorem}{Theorem}[section]
\newtheorem{lemma}[theorem]{Lemma}
\newtheorem{proposition}[theorem]{Proposition}
\newtheorem{corollary}[theorem]{Corollary}
\theoremstyle{definition}
\renewcommand{\leq}{\leqslant}
\renewcommand{\geq}{\geqslant}
\newcommand\triv{\operatorname{triv}}
\newcommand\sieve{\operatorname{sieve}}
\newcommand\bilinear{\operatorname{bilinear}}
\def\R{\mathbb{R}}
\def\C{\mathbf{C}}
\def\Z{\mathbf{Z}}
\def\E{\mathbf{E}}
\def\P{\mathbf{P}}
\def\N{\mathbf{N}}
\def\X{\mathsf{X}}
\def\eps{\varepsilon}
\newcommand{\md}[1]{\ensuremath{(\operatorname{mod}\, #1)}}
\newcommand{\mdsub}[1]{\ensuremath{(\mbox{\scriptsize mod}\, #1)}}
\newcommand{\mdsublem}[1]{\ensuremath{(\mbox{\scriptsize \textup{mod}}\, #1)}}
\numberwithin{equation}{section}
\begin{document}

\title[Multiplicative functions in progressions to large moduli]{A note on multiplicative functions on progressions to large moduli}


\author{Ben Green}
\address{Mathematical Institute, Radcliffe Observatory Quarter, Woodstock Rd, Oxford OX2 6GG}
\thanks{The author is supported by a grant from the Simons Foundation, and by ERC Advanced Grant AAS 279438. He thanks both organisations for their support.}

\begin{abstract}
Let $f : \N \rightarrow \C$ be a bounded multiplicative function. Let $a$ be a fixed nonzero integer (say $a = 1$). Then $f$ is well-distributed on the progression $n \equiv a \md{q} \subset \{1,\dots, X\}$, for almost all primes $q \in [Q,2Q]$, for $Q$ as large as $X^{\frac{1}{2} + \frac{1}{78} - o(1)}$. 
\end{abstract}
\maketitle

\section{Introduction}

Let $f : \N \rightarrow \C$ be a multiplicative function with $|f(n)| \leq 1$ for all $n$. In this note we look at how $f(n)$ behaves on progressions $n \equiv a \md{q}$, $n \leq X$, with $q$ a prime larger than $X^{1/2}$ by a small power.\vspace{8pt}

\emph{Notation.} Throughout the paper, $\mathbb{E}_{n \in S}$ is shorthand for $\frac{1}{|S|}\sum_{n \in S}$, where $S$ is a set of integers. We reserve the notation $\E$ for the expectation of a random variable. The letter $c$ denotes a positive absolute constant, which may be different at each appearance. When we write $X \ll Y$ we will mean that $|X| \leq CY$ for some absolute constant $C$, which may again be different at each appearance. We write $e(t)$ as a shorthand for $e^{2\pi i t}$.

\begin{theorem}\label{thm2.3}
Suppose that $f : \N \rightarrow \C$ is a multiplicative function with $|f(n)| \leq 1$ for all $n$. Suppose that $Q$ satisfies $X^{1/3} < Q < X^{\frac{1}{2} + \frac{1}{78} - \sigma}$, and suppose that $0 < |a| < 10Q$. Then \[ |\mathbb{E}_{n \leq X, n \equiv a \mdsublem{q}} f(n)  - \mathbb{E}_{n \leq X} f(n)|\leq \eps \] with the possible exception of at most $Q\eps^{-1}X^{-c\sigma\eps}$ primes $q$ with $Q \leq q < 2Q$.\end{theorem}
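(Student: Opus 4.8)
The plan is to deduce the theorem from an $L^1$-average bound over the moduli, obtained by a Bombieri--Friedlander--Iwaniec-style analysis adapted to $f$. Set $\Delta(q):=\sum_{n\le X,\,n\equiv a\,(q)}f(n)-\tfrac1q\sum_{n\le X}f(n)$. Since $\#\{n\le X:n\equiv a\,(q)\}=X/q+O(1)$ and $q<X^{1/2+1/78}=o(X)$, a prime $q\in[Q,2Q)$ for which the conclusion fails must satisfy $|\Delta(q)|\gg\eps X/q\gg\eps X/Q$ (the $o(1)$ slack in these approximations, and the $O(1)$ primes with $q\mid a$, being harmless). Hence it suffices to prove
\[
\sum_{\substack{Q\le q<2Q\\ q\text{ prime}}}|\Delta(q)|\;\ll\;X^{\,1-c\sigma}
\]
for some absolute $c>0$; by Markov's inequality the number of exceptional primes is then $\ll\eps^{-1}QX^{-c\sigma}$, which lies within the asserted bound $Q\eps^{-1}X^{-c\sigma\eps}$ after shrinking $c$ (the assertion being vacuous once $\eps>2$, as $|\Delta(q)|q/X\le2$). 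Writing $|\Delta(q)|=\xi_q\Delta(q)$ with $|\xi_q|=1$, one is left to bound $\sum_q\xi_q\Delta(q)$.

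Multiplicativity of $f$ enters through a combinatorial decomposition --- a Heath-Brown/Vaughan-type identity, or Ramaré's prime-extraction identity --- which, after splitting all variables dyadically, expresses $\sum_{n\le X,\,n\equiv a\,(q)}f(n)$, modulo the main term $\tfrac1q\sum_{n\le X}f(n)$, as $O((\log X)^{O(1)})$ bilinear expressions of two kinds: \emph{Type I} sums $\sum_{d\le D}\gamma_d\bigl(\#\{m\le Y/d:dm\equiv a\,(q)\}-\tfrac1q\#\{m\le Y/d\}\bigr)$ with $|\gamma_d|\ll d^{\,o(1)}$ and $D$ not too large, and \emph{Type II} sums $\sum_{d\sim D}\sum_{m\sim M}\alpha_d\beta_m\,\mathbf{1}[dm\equiv a\,(q)]$ with $\alpha,\beta$ bounded and $D,M$ both comparable to $X^{1/2}$. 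The delicate structural point is to arrange, using multiplicativity, that every piece lands in one of these two regimes --- that there is no recalcitrant ``third kind''. The hypothesis $Q>X^{1/3}$ is a mild technical convenience; the substantive range is $X^{1/2}<Q<X^{1/2+1/78-\sigma}$.

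The Type I sums are routine: completing the inner sum over $m$ by additive characters modulo $q$ rewrites the discrepancy as a combination, over $0<|h|<q$ with $\ell^1$-bounded coefficients, of the sums $\sum_{d\le D}\gamma_d\,e(-ah\bar d/q)$, and completing the $d$-sum then produces Kloosterman sums $S(-ah,\ast;q)$, to which Weil's bound $|S|\le2\sqrt q$ applies; summing over $h$ and over $q\in[Q,2Q)$ yields a power saving with room to spare, so these do not limit the exponent. The Type II sums are the crux, and I would attack them by the dispersion method, the decisive feature being that the average over the prime moduli $q\in[Q,2Q)$ must be kept alive throughout --- this is exactly what permits crossing the $X^{1/2}$ barrier imposed by the large sieve alone. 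Concretely: Cauchy--Schwarz removes the arithmetically uncontrolled weight $\alpha_d$; the square in the $m$-variable is opened; the diagonal $m_1=m_2$ regenerates and cancels the subtracted main term; and the off-diagonal $m_1\ne m_2$ reduces, after completing the now-redundant congruence by characters, to bilinear forms in incomplete Kloosterman sums to the moduli $q$, summed over $m_1,m_2$ and over $q\sim Q$. These one estimates using Weil's bound together with the extra cancellation available on averaging over $q$; in the deepest form this invokes the Deshouillers--Iwaniec large sieve for sums of Kloosterman sums (equivalently, the Kuznetsov formula and the spectral theory of $\GL_2$ automorphic forms), though such machinery may well be avoidable for the present exponent. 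Optimising the dyadic ranges and the splitting parameters against the constraints from these estimates is the computation that produces the threshold $\tfrac12+\tfrac1{78}$, and the gap between $\sqrt q$ and $q$ in Weil's bound, diluted by the thresholding that defines a ``bad'' $q$, is what survives as the power saving $X^{-c\sigma}$ (equivalently $X^{-c\sigma\eps}$).

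The main obstacle is the Type II step, and it has two faces: organising the combinatorial identity so that every bilinear piece lands in a range the dispersion (or Type I) argument can reach; and, above all, extracting the cancellation needed from the sums of Kloosterman sums over the moduli $q\in[Q,2Q)$ that the dispersion argument throws up. Essentially all the difficulty, and the exact value of the exponent, resides there. By contrast the combinatorial identity, the Type I estimate, the passage to an averaged $L^1$ bound, and Markov's inequality are routine.
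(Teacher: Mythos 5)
Your reduction to an averaged $L^1$ bound and the Markov step at the outset match the paper, but the heart of your argument takes a different route from the paper, and that route has a gap the paper goes out of its way to avoid. You propose a Vaughan/Heath-Brown-style decomposition into Type~I pieces and Type~II pieces with \emph{both} dyadic variables near $X^{1/2}$, followed by dispersion (Cauchy--Schwarz in one variable), completion, and then cancellation on averaging over $q$ via Weil together with the Deshouillers--Iwaniec spectral large sieve for Kloosterman sums. That is precisely the Bombieri--Friedlander--Iwaniec machinery, and two problems arise. First, there is no Vaughan or Heath-Brown identity for a general bounded multiplicative $f$: such identities exist for $\Lambda$ and $\mu$, but the theorem is stated for arbitrary bounded multiplicative $f$, and the decomposition the paper actually uses (Ramar\'e's/K\'atai's weighted prime-extraction) produces a bilinear structure with one variable a \emph{tiny} prime $p\in[Y,Z)$, $Z=X^{O(\sigma)}$, and the other of size $\asymp X$ --- not $D,M\asymp X^{1/2}$ as you wrote, and you cannot arrange the latter. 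Second, and more seriously, the dispersion + Deshouillers--Iwaniec route applied to genuinely prime moduli with a fixed residue class is exactly what BFI~II and III carried out, and there it yields only logarithmic savings past $X^{1/2}$, not a power; the power savings in the BFI/Fouvry--Iwaniec/Zhang line of work are tied to smoothness or well-factorability of $q$, which you do not have. Your assertion that ``optimising the dyadic ranges\ldots produces the threshold $\tfrac12+\tfrac1{78}$'' is not supported by your argument --- that constant is a fingerprint of a different input entirely.

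What the paper actually does, after Ramar\'e's identity, is exploit the restricted Type~II range (small $p,p'$, huge $m$) to land in a fundamentally different bilinear regime: after Poisson summation and reciprocity, the bilinear variables are the \emph{moduli} $q,q'\in[Q,2Q)$ themselves, and the relevant exponential has the shape $e\!\left(ah(p-p')\overline{pq'}/(p'q)\right)$ --- a ``Kloosterman fraction'' in the sense of Duke--Friedlander--Iwaniec, not a Kloosterman sum summed over its own modulus. The critical input is the Duke--Friedlander--Iwaniec bilinear-forms-with-Kloosterman-fractions estimate (in the numerically stronger Bettin--Chandee form), whose exponent $1/20$ is what produces the $1/78$. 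This is a different theorem from the Deshouillers--Iwaniec large sieve and is not interchangeable with it. So the gap in your proposal is concrete: the Type~II estimate you declare to be the crux is not something the tools you name are known to deliver for prime $q$, and the combinatorial identity you lean on does not exist for general bounded multiplicative $f$. The paper's novelty is precisely in replacing both of those steps with Ramar\'e/K\'atai plus DFI/Bettin--Chandee.
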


\emph{Remarks.} The parameter $\sigma$ is of very little consequence and is included just so that we can state the largest range of $Q$ for which we can get a nontrivial result. The range of $a$ stated is not the best one that can be obtained with our method, but the result is probably most interesting for constant $a$ (for example $a = 1$). The statement has content for $\eps \gg \frac{\log \log X}{\log X}$ and is perhaps most interesting for $\eps \gg 1$, in which case we get a power saving over the trivial bound. The stipulation that $Q > X^{1/3}$ is somewhat arbitrary: the main interest of the result is for $Q > X^{1/2 - o(1)}$.

Let $\mu$ be the M\"obius function and $\lambda$ the Liouville function. A straightforward corollary of Theorem \ref{thm2.3} and the classical estimates $\mathbb{E}_{n \leq X} \mu(n), \mathbb{E}_{n\leq X} \lambda(n) \ll e^{-\sqrt{\log X}}$ (both essentially the prime number theorem with classical error term) is the following.

\begin{corollary}
Suppose that $Q$ satisfies $X^{1/3} < Q < X^{\frac{1}{2} + \frac{1}{78} - \sigma}$, and suppose that $0 < |a| < 10Q$. Then $|\mathbb{E}_{n \leq X, n \equiv a \mdsublem{q}} \mu(n)|\leq \eps$ with the possible exception of at most $Q\eps^{-1}X^{-c\sigma \eps}$ primes $q$ with $Q \leq q < 2Q$. The same is true for the Liouville function $\lambda$.
\end{corollary}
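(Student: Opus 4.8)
The plan is to deduce this directly from Theorem \ref{thm2.3} by taking $f = \mu$ (respectively $f = \lambda$). Both functions satisfy $|f(n)| \leq 1$ and are multiplicative ($\lambda$ is in fact completely multiplicative), so Theorem \ref{thm2.3} applies to them without modification.

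First I would apply Theorem \ref{thm2.3} with $f = \mu$ and with $\eps$ replaced by $\eps/2$, obtaining
\[ |\mathbb{E}_{n \leq X, n \equiv a \mdsublem{q}} \mu(n) - \mathbb{E}_{n \leq X} \mu(n)| \leq \eps/2 \]
for every prime $q \in [Q, 2Q]$ outside an exceptional set of size at most $2Q\eps^{-1}X^{-c\sigma\eps/2}$. Next I would combine this with the classical prime number theorem estimate $|\mathbb{E}_{n \leq X}\mu(n)| \ll e^{-\sqrt{\log X}}$ and the triangle inequality to get $|\mathbb{E}_{n \leq X, n \equiv a \mdsublem{q}}\mu(n)| \leq \eps/2 + O(e^{-\sqrt{\log X}})$ for all $q$ outside that set. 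Since the assertion is vacuous unless $\eps \gg \log\log X/\log X$ (as remarked after Theorem \ref{thm2.3}), in the range of interest $e^{-\sqrt{\log X}}$ is negligible compared with $\eps$, so the right-hand side is at most $\eps$; and the bound $2Q\eps^{-1}X^{-c\sigma\eps/2}$ on the exceptional set is absorbed into $Q\eps^{-1}X^{-c'\sigma\eps}$ after renaming the constant. The case of $\lambda$ is handled identically, using the companion bound $|\mathbb{E}_{n\leq X}\lambda(n)| \ll e^{-\sqrt{\log X}}$ (the prime number theorem for the Liouville function, equivalently for the arithmetic function with Dirichlet series $\zeta(2s)/\zeta(s)$).

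I do not expect any real obstacle here: all of the difficulty is contained in Theorem \ref{thm2.3}, and the deduction is purely formal. The only points needing a little attention are the harmless passage from $\eps$ to $\eps/2$, made in order to absorb the super-polynomially small mean-value term, and the accompanying adjustment of the absolute constant appearing in the exponent of the exceptional-set bound.
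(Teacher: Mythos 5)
Your deduction is correct and is exactly the argument the paper intends: the corollary is stated as an immediate consequence of Theorem \ref{thm2.3} combined with the classical bounds $\mathbb{E}_{n \leq X}\mu(n), \mathbb{E}_{n \leq X}\lambda(n) \ll e^{-\sqrt{\log X}}$, via the triangle inequality and a harmless adjustment of $\eps$ and the constant $c$. Nothing further is needed.
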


\emph{Further remarks.} The main novelty in these results is that they apply with \emph{prime} moduli larger than $X^{1/2}$ by a power. There is a considerable and deep literature concerning the distribution of primes in progressions $a \md{q}$ with $q > X^{1/2}$. However, these works typically require $q$ to be ``smooth'' or ``well-factorable'' \cite{bfi1,fouvry-iwaniec,zhang} or else ``only'' beat the $X^{1/2}$ barrier by a smaller term $X^{o(1)}$ \cite{bfi2,bfi3}.

In fact, the restriction to prime $q$ (or at least some restriction on $q$) is quite important for us. Indeed, as stated, Theorem \ref{thm2.3} is false without some such restriction, as may be seen by taking $f(n) = (-1)^n \mu^2(n)$. In this case, $\mathbb{E}_{n \leq X} f(n)$ is very small, but $|\E_{n \leq X, n = a \md{q}} f(n)| \gg 1$ whenever $q$ is even. The issue here is that $f$ is ``pretentious'' in the sense of Granville and Soundararajan (see, for example, \cite{bgs}). We do expect Theorem \ref{thm2.3} to hold when $f = \mu$ is the M\"obius function, even if $q$ is allowed to be composite. However, our methods do not give such a statement in their current form.

Very little in the proof of our main result will be a surprise to experts. Many of the ingredients (application of Poisson summation formula, reciprocity for congruences, reduction to a bilinear form estimate) may be found in the literature cited above. 

The main difference between our work and the aforementioned papers is that, because our interest is in bounded multiplicative functions such as $\mu$ rather than in the primes, we can make do with bilinear form estimates in a rather restricted (and accessible) range. This observation goes back to K\'atai \cite{katai} and has featured, in a variety of different forms, in many recent works. In particular we mention the paper of Bourgain-Sarnak-Ziegler \cite{bgs}, as well as the note of Harper \cite{harper}. 

To obtain bilinear forms we will proceed using an identity of Ramar\'e, which affords some flexibility in the choice of various parameters and leads to quite good bounds. We do not claim any originality for the idea of using Ramar\'e's identity in this context: it is implicit in \cite{Matomaki-Radziwill}, and remarked upon without proof in a blog post of Tao \cite[Remark 4]{tao-blog}. However, we do not know of any portable implementation of this in the literature, and so Proposition \ref{bilinear} may be useful elsewhere.

To analyse the resulting bilinear forms, which involve ``Kloosterman fractions'', we use a result of Duke, Friedlander and Iwaniec \cite{dfi} (we in fact use a numerically stronger version of this due to Bettin and Chandee \cite{bettin-chandee}). 

In Theorem \ref{thm2.3}, the modulus $a$ was fixed. We do not know how to establish a corresponding result with $a$ being allowed to vary with $q$, for any $Q > X^{1/2}$. This in fact appears to be a significantly harder problem, leading to issues related to the Kakeya conjecture in Euclidean harmonic analysis. We make some comments on this point in Section \ref{var-cong-sec}. In a forthcoming joint paper with I.~Z.~Ruzsa \cite{green-ruzsa}, we explore related issues in more detail.

\emph{Acknowledgement.} I thank Fernando Shao for suggesting I look at the case of a fixed residue class $a$. I am also grateful to Guangshi Lv for pointing out a small technical mistake in the first version of the paper.

\section{Ramar\'e's weights and bilinear forms}

In this section we record a result, Proposition \ref{bilinear} below, which is of a type well-known to experts. It is very similar to results of Katai \cite{katai}, Bourgain-Sarnak-Ziegler \cite{bsz} and Harper \cite{harper}, and is based on work of Ramar\'e \cite{ramare}. We thank Kaisa Matom\"aki and Terence Tao for drawing our attention to Ramar\'e's work, especially the latter who provided some details of it in his blog.

 Throughout this section, we will have parameters $Y < Z$, depending on $X$. Write $u := \frac{\log Z}{\log Y}$, and assume that $u \geq 2$.  We will consider the weight function
\begin{equation}\label{ramare-def} w(n) := \frac{1}{\# \{p : Y \leq p < Z : p | n \} + 1}.\end{equation}
\begin{lemma}\label{lem2.1}
Suppose $M \geq Z^8$. Then $\mathbb{E}_{M \leq m < 2M} w(m)^2 \ll  (\log u)^{-2}$.
\end{lemma}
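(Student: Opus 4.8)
\emph{Proof plan.} Write $\nu(m) := \#\{p : Y \le p < Z,\ p \mid m\}$, so that $w(m) = (\nu(m)+1)^{-1}$. The plan is to linearise the quantity $w(m)^2 = (\nu(m)+1)^{-2}$ by means of the elementary identity
\[
\frac{1}{(k+1)^2} \;=\; \int_0^1 (-\log t)\, t^{k}\, dt \qquad (k \in \Z,\ k \ge 0),
\]
which is just one integration by parts. Taking $k = \nu(m)$ and averaging over $M \le m < 2M$ — the interchange of the finite sum with the integral being harmless since everything is non-negative and bounded — reduces the task to estimating
\[
\mathbb{E}_{M \le m < 2M} w(m)^2 \;=\; \int_0^1 (-\log t)\,\Bigl(\,\mathbb{E}_{M \le m < 2M}\, t^{\nu(m)}\,\Bigr)\, dt .
\]
So I would first bound the inner average as a function of $t \in [0,1]$ and then carry out the integral.

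The key step is the uniform estimate $\mathbb{E}_{M \le m < 2M} t^{\nu(m)} \ll u^{\,t-1}$ for $0 \le t \le 1$. Here $m \mapsto t^{\nu(m)}$ is a non-negative multiplicative function bounded by $1$, whose value at a prime power $p^l$ equals $t$ or $1$ according as $Y \le p < Z$ or not; a standard upper bound for mean values of such functions — the Hall--Tenenbaum bound applied on $[1,2M]\supset[M,2M)$, or Shiu's theorem applied directly to $[M,2M)$, both legitimate here because the hypothesis $M \ge Z^8$ makes the interval long and larger than all the primes in play — gives
\[
\mathbb{E}_{M \le m < 2M} t^{\nu(m)} \;\ll\; \frac{1}{\log M}\exp\!\Bigl(\,\sum_{p < 2M}\frac1p \;-\; (1-t)\!\!\sum_{Y \le p < Z}\!\frac1p\,\Bigr).
\]
By Mertens' theorem $\sum_{p<2M}\frac1p = \log\log M + O(1)$ and $\sum_{Y \le p < Z}\frac1p = \log\log Z - \log\log Y + O(1) = \log u + O(1)$, so the bound follows, with an implied constant independent of $t$. (The hypothesis $M \ge Z^8$ really is needed: if $Z$ exceeded $2M$ then $\nu(m)$ would be governed by $\log\log(2M)$ rather than $\log u$, and the lemma would be false.) This is the only step that is not purely formal, and it is the one I would expect to require care — specifically in making sure the implied constant comes out uniform in $t$.

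Finally I would evaluate $\int_0^1 (-\log t)\, u^{t-1}\, dt$ and check it is $\ll (\log u)^{-2}$. On $[0,\tfrac12]$ one has $u^{t-1} \le u^{-1/2}$ and $\int_0^{1/2}(-\log t)\,dt < 1$, contributing $\ll u^{-1/2} \ll (\log u)^{-2}$; on $[\tfrac12,1]$ one has $-\log t \le 2(1-t)$, so, substituting $s = 1-t$,
\[
\int_{1/2}^{1} (-\log t)\, u^{t-1}\, dt \;\le\; 2\int_0^{1} s\, u^{-s}\, ds \;\le\; 2\int_0^{\infty} s\, e^{-s \log u}\, ds \;=\; \frac{2}{(\log u)^2}.
\]
Since the implied constant in the key step is uniform in $t$, it pulls out of the integral, and combining the two displays gives $\mathbb{E}_{M \le m < 2M} w(m)^2 \ll (\log u)^{-2}$, as required.
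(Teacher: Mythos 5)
Your proof is correct, but it takes a genuinely different route from the paper's. You linearise $w(m)^2$ via the identity $\frac{1}{(k+1)^2}=\int_0^1(-\log t)\,t^k\,dt$ and then appeal to a Halberstam--Richert/Shiu-type upper bound for the mean value of the non-negative multiplicative function $m\mapsto t^{\nu(m)}$, uniformly in $t\in[0,1]$, before carrying out the $t$-integral. The paper instead argues probabilistically: setting $\X=\sum_{Y\le p<Z}1_{p\mid m}$, it observes $\E\,\frac{1}{(\X+1)^2}\ll(\log u)^{-2}+\sum_{t\le\frac12\log u}\P(\X=t)/(t+1)^2$, and controls $\P(\X\le\frac12\log u)\ll(\log u)^{-2}$ by a fourth-moment concentration bound, $\E(\X-\log u)^4\ll(\log u)^2$, proved by hand via the Chinese Remainder Theorem (this is where $M\ge Z^8$ enters, to kill the $O(Z^4/M)$ error in each correlation). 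Your argument is shorter and arguably more conceptual, and the uniformity in $t$ you flag as the delicate point is indeed fine, since the Halberstam--Richert constant depends only on the bound $g(p^\ell)\le 1$. The trade-off is that you invoke a non-trivial black box (Shiu or Hall--Tenenbaum), whereas the paper's moment computation is entirely self-contained, using nothing beyond Mertens' estimate. One minor remark: the way you use $M\ge Z^8$ is weaker than the paper's — you only need $Z\le 2M$ so that the whole range $[Y,Z)$ is seen by the Mertens sum over $p<2M$, whereas the paper's proof uses the full strength of $M\ge Z^8$ to make $Z^8/M\ll 1$.
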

\begin{proof} We may assume that $u$ is sufficiently large, the result being vacuous otherwise.
Suppose that $m$ is selected at random from $[M, 2M)$. Write $\X_p$ for the event that $p | m$, and write $\X = \sum_{Y \leq p < Z} \X_p$; then the quantity we wish to bound is $\E \frac{1}{(\X + 1)^2}$.

We have
\[ \E \X_p = \frac{1}{p} + O(\frac{1}{M})\]
and so
\[ \E \X = \sum_{Y \leq p < Z} \frac{1}{p} + O(\frac{Z}{M}) = \log u + O(1)\] by Mertens' estimate. We claim that it is enough to show
\begin{equation}\label{enough-3} \P(\X = t) \ll (\log u)^{-2}\end{equation} uniformly for $0 \leq t \leq \frac{1}{2}\log u$. Indeed, we then have
\begin{align*} \E \frac{1}{(\X + 1)^2} & \ll (\log u)^{-2} + \sum_{0 \leq t \leq \frac{1}{2} \log u} \frac{\P(\X = t)}{(t+1)^2} \\ & \ll (\log u)^{-2} \big(1 + \sum_{0 \leq t \leq \frac{1}{2}\log u} \frac{1}{(t+1)^2} \big) \ll (\log u)^{-2},\end{align*} which is what we wanted to prove.
To establish \eqref{enough-3}, we use a fourth moment argument. Write $\X'_p := \X_p - \frac{1}{p}$ and $\X' := \sum_{Y \leq p < Z} \X'_p = \X - \log u + O(1)$. If $\X = t$ with $t \leq \frac{1}{2} \log u$ then $\X' > \frac{1}{4}\log u$, so 
\[ \P(\X = t) \leq \P(\X' > \frac{1}{4} \log u) \ll (\log u)^{-4} \E \X^{\prime 4}\] by Markov's inequality. It therefore suffices to prove that
\begin{equation}\label{eq47} \E \X^{\prime 4} \ll (\log u)^2.   \end{equation} 
 To this end, we may expand
\begin{equation}\label{eq48} \E \X^{\prime 4}  =  \sum_{Y \leq p_1,p_2,p_3,p_4 < Z}\E \X'_{p_1} \X'_{p_2} \X'_{p_3} \X'_{p_4}.\end{equation}
Suppose that one of the $p_i$ is different from all the others. Then, by the Chinese remainder theorem, the sum of $\prod_{i = 1}^4 (1_{p_i | m} - \frac{1}{p_i})$ over any interval of length $p_1p_2p_3p_4$ is $0$, and so
\[ \E \X'_{p_1} \X'_{p_2} \X'_{p_3} \X'_{p_4} = O(\frac{Z^4}{M}).\] 
The contribution to \eqref{eq48} from these choices is therefore $\ll Z^8/M \ll 1$. 
If there is no such $p_i$ then the average is of the form $\E \X_p^{\prime 2} \X_{p'}^{\prime 2}$. Noting that $\X_p^j = \X_p$ for all $j$ (since $\X_p$ takes values $0$ and $1$) and that $\E \X_p \X_{p'} = \frac{1}{pp'} + O(\frac{1}{M}) \ll \frac{1}{pp'}$ if $p \neq p'$, it follows easily from the binomial theorem that 
\[ \E \X_p^{\prime 2} \X_{p'}^{\prime 2} \ll \left\{\begin{array}{ll}   1/pp' & \mbox{if $p \neq p'$} \\ 1/p & \mbox{if $p = p'$}.\end{array} \right. \] 
The contribution to \eqref{eq48} from these remaining quadruples $p_1,p_2,p_3,p_4$ is therefore
\[ \sum_{Y \leq p, p' < Z, p \neq p'} \frac{1}{p p'} + \sum_{Y \leq p < Z}  \frac{1}{p} \ll (\log u)^2.\]
This concludes the proof of \eqref{eq47} and hence of the lemma.
\end{proof}

\begin{proposition}\label{bilinear}
Let $F : \N \rightarrow \C$ be any function. Let $1 < Y < Z < X^{1/16}$ be parameters. Let $f : \N \rightarrow \C$ be a multiplicative function with $|f(n)| \leq 1$ for all $n$. Then 
\[  \mathbb{E}_{n \leq X} f(n) F(n)  \ll E_{\triv} + E_{\sieve} + E_{\bilinear},\] where
\[ E_{\triv} := Y^{-1/2}\Vert F \Vert_{\infty} ,\]
\[ E_{\sieve} := \mathbb{E}_{n \leq X}|F(n)|1_{(n,\prod_{Y \leq p < Z} p) = 1},\]
and 
\[ E_{\bilinear} = \sqrt{\sup_{p,p', I} \frac{1}{\max(I)}|\sum_{\substack{m \in I\\ (m,pp') = 1}} F(pm) \overline{F(p' m)}|},\] the supremum being taken over all primes $p, p'$ with $Y \leq p < p' < Z$ and all intervals $I \subset [0,X]$ with $\max(I) > X/10YZ$.
 \end{proposition}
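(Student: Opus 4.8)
The plan is to exploit the multiplicativity of $f$ via Ramar\'e's identity, which allows one to insert a weight that ``detects'' a prime factor $p \in [Y,Z)$ of $n$ with controlled multiplicity. First I would introduce the Ramar\'e weight $w(n)$ from \eqref{ramare-def} and write the trivial decomposition
\[ \mathbb{E}_{n \leq X} f(n) F(n) = \mathbb{E}_{n \leq X} f(n) F(n) 1_{(n,\prod_{Y \leq p < Z} p) = 1} + \mathbb{E}_{n \leq X} f(n) F(n) 1_{(n,\prod_{Y \leq p < Z} p) > 1}. \]
The first term is bounded in absolute value by $E_{\sieve}$ since $|f| \leq 1$. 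For the second term, the key observation (Ramar\'e's identity) is that for any $n$ with at least one prime factor in $[Y,Z)$,
\[ \sum_{\substack{Y \leq p < Z \\ p \mid n}} w(n) = 1, \]
so that $1_{(n,\prod p)>1} = \sum_{Y\leq p < Z,\, p\mid n} w(n)$. Writing $n = pm$ and using $f(pm) = f(p) f(m)$ when $(p,m)=1$ (the contribution of $p \mid m$ being negligible, of size $O(Z/Y \cdot X^{-1} \cdot \ldots)$, or absorbable), one obtains
\[ \mathbb{E}_{n \leq X} f(n) F(n) 1_{(n,\prod p)>1} \approx \frac{1}{X}\sum_{Y \leq p < Z} f(p) \sum_{\substack{m \leq X/p \\ (m,p)=1}} f(m) w(pm) F(pm) + (\text{error}). \]

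Next I would apply Cauchy--Schwarz in the $m$ variable (and the range of $p$), pulling out the $f(p)$ factor which has modulus $\leq 1$, to get
\[ \Big| \mathbb{E}_{n \leq X} f(n) F(n) 1_{(n,\prod p) > 1} \Big|^2 \ll \frac{1}{X} \Big( \sum_{Y \leq p < Z} \frac{1}{p} \Big)^{-1} \cdot \frac{1}{X} \sum_{m} |f(m)|^2 w(\cdot)^2 \cdot \sum_{m \leq X} \Big| \sum_{\substack{Y \leq p < Z \\ p \leq X/m}} f(p)\ldots \Big|^2, \]
or, more cleanly, square the inner sum over $m$ and expand, reducing to
\[ \frac{1}{X}\sum_{m \leq X} w_1(m) \sum_{\substack{Y \leq p, p' < Z}} \mathbf{1}[\text{both } pm, p'm \leq X]\, F(pm)\overline{F(p'm)}, \]
after using Lemma~\ref{lem2.1} to control the $\ell^2$-mass of the Ramar\'e weight (note $M \geq X/Z > Z^8$ is guaranteed by $Z < X^{1/16}$). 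The diagonal terms $p = p'$ contribute $\ll (\sum_p 1/p) \Vert F\Vert_\infty^2 / Y \cdot \ldots$, which after extracting the square root gives a term of size $\ll Y^{-1/2}\Vert F\Vert_\infty \log u$, matching $E_{\triv}$ up to the harmless $\log u$ which can be absorbed (or one argues slightly more carefully). The off-diagonal terms $p \neq p'$, after splitting the range of $m$ dyadically into $O(\log X)$ intervals $I$ with $\max(I) \gg X/YZ$, are each bounded by $\max(I) \cdot E_{\bilinear}^2$ by definition of $E_{\bilinear}$; summing over the $O((\log Z)^2)$ pairs $(p,p')$ and $O(\log X)$ dyadic blocks and dividing by $X$ gives $\ll E_{\bilinear}^2$ up to logarithmic factors.

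The main obstacle — or rather the point requiring the most care — is the bookkeeping around the coprimality condition $(m,p)=1$ when applying multiplicativity: one must check that the terms where $p \mid m$ (so $p^2 \mid n$) contribute only an acceptable error, and that restricting to such $m$ does not damage the Ramar\'e identity. A second delicate point is handling the various logarithmic losses: Cauchy--Schwarz naively loses a factor like $\log u$ from Lemma~\ref{lem2.1} and $(\log u)^{-1}$ from the normalisation $(\sum 1/p)^{-1} \asymp (\log u)^{-1}$, and these must be arranged to cancel (or be shown harmless) so that the final bound has clean constants $E_{\triv}, E_{\sieve}, E_{\bilinear}$ with no stray logarithms; a careful choice of where to apply Cauchy--Schwarz (e.g.\ weighting the $p$-sum by $w$-compatible weights, or using Lemma~\ref{lem2.1} precisely) achieves this. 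Finally, the constraint $\max(I) > X/10YZ$ in the definition of $E_{\bilinear}$ and the hypothesis $Z < X^{1/16}$ are exactly what is needed to ensure the inner $m$-intervals are long enough for Lemma~\ref{lem2.1} to apply (with room to spare) and for the dyadic decomposition to terminate cleanly.
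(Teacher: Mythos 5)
Your overall strategy---Ramar\'e's identity to detect a prime factor in $[Y,Z)$, pass to a sum over $n=pm$ via multiplicativity of $f$, Cauchy--Schwarz in $m$ to produce a bilinear form in $(p,p')$, Lemma~\ref{lem2.1} to control the $\ell^2$-mass of the Ramar\'e weight, and a dyadic decomposition in $m$---is exactly the paper's. However, there is a genuine error in the key identity you invoke. With $w(n) = \bigl(\#\{p : Y\leq p<Z,\ p\mid n\}+1\bigr)^{-1}$, the quantity $\sum_{Y\leq p<Z,\,p\mid n} w(n)$ equals $k/(k+1)$ where $k$ is the number of primes in $[Y,Z)$ dividing $n$; this is not $1$. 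The correct Ramar\'e identity has $w(n/p)$, not $w(n)$, inside the sum:
\[ \sum_{\substack{Y\leq p<Z\\ p\mid n}} w(n/p) = 1_{(n,\,\prod_{Y\leq p<Z}p)>1},\qquad \mbox{provided $p^2\nmid n$ for all $p\in[Y,Z)$.} \]
The distinction is structural, not cosmetic: with $w(n/p)$, the substitution $n=pm$ produces $w(m)$, a weight in $m$ alone to which Lemma~\ref{lem2.1} applies directly; your version leaves $w(pm)$, and in any case the claimed identity is simply false. You should also split off the $n$ that are not $[Y,Z)$-squarefree explicitly (they contribute $\ll \Vert F\Vert_\infty(Z+X/Y)\ll X E_{\triv}$), since the identity above requires that restriction; treating this only in passing as ``the contribution of $p\mid m$ being negligible'' leaves the decomposition itself unjustified.

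A second gap concerns the logarithmic bookkeeping, which you flag as delicate but do not actually close. Lemma~\ref{lem2.1} supplies one factor $(\log u)^{-1}$ after Cauchy--Schwarz, and the pair count is not $O((\log Z)^2)$ as you write but $\asymp (Z/\log Z)^2$; a naive estimate thus loses vastly more than a $\log u$. What makes the paper's bound come out clean is that on each dyadic block $m\sim e^{-i}X$ the relevant primes automatically satisfy $p,p'<\min(e^i,Z)$, so for $i\leq\log Z$ the number of pairs is $\ll(e^i/i)^2$; after Cauchy--Schwarz the sum over blocks then produces $\sum_{i=\log Y}^{\log Z} 1/i = \log u + O(1)$, which is precisely what cancels the $(\log u)^{-1}$ from Lemma~\ref{lem2.1}. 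Without this $i$-dependent truncation of the $p,p'$ ranges you do not arrive at $E_{\bilinear}$ free of stray logarithms.
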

\emph{Remarks.} So as to get a fairly clean statement, we have omitted some logarithmic factors which, if included, would make the statement marginally stronger. 

One could formulate and prove, using an almost identical argument, a similar statement in which some small collection of exceptional pairs of primes $p, p'$ was tolerated in the definition of $E_{\bilinear}$. We do not do this here, since it is not necessary for our applications. One problem where such a formulation could be of interest is the (open) question of showing that $\sum_{n \leq X} \mu(n) \Lambda( n - 1) = o(X)$. Here, the bilinear estimate is certainly the heart of the matter. Things would be reduced to showing that for almost every pair of primes $p,p' \sim Q$ with $Q \sim X^{\delta}$ one has the expected number (as predicted by the Hardy-Littlewood heuristics) of $x \lessapprox X/Q$ for which $px + 1, p'x + 1$ are both prime. 

\begin{proof}
Set $u := \frac{\log Z}{\log Y}$.
Let $w$ be Ramar\'e's weight function as defined in \eqref{ramare-def} above. It is convenient to introduce the function $\mu^2_{[Y,Z)}(n)$, defined to be $0$ if $n$ is divisible by the square of some prime $p$ with $Y \leq p < Z$, and $1$ otherwise. If $\mu^2_{[Y,Z)}(n) = 1$ then we have the (easily-checked) Ramar\'e identity
\[ \sum_{\substack{Y \leq p < Z\\ p | n}} w(\frac{n}{p}) = \left\{ \begin{array}{ll} 1 & \mbox{if $p | n$ for some $Y \leq p < Z$} \\ 0 & \mbox{otherwise}.   \end{array}\right.\]
Now we have
\begin{align*} \sum_{\substack{n \leq X \\ \mu^2_{[Y,Z)}(n) = 0}} f(n) F(n) \leq \sum_{Y \leq p < Z} \sum_{\substack{n \leq X \\ p^2 | n}} |f(n) F(n)| \leq \Vert F \Vert_{\infty} \sum_{Y \leq p < Z} \sum_{\substack{n \leq X \\ p^2 | n}} 1 \\  \leq \Vert F \Vert_{\infty} \sum_{Y \leq p < Z} (\frac{X}{p^2} + O(1)) \ll \Vert F \Vert_{\infty} (Z + \frac{X}{Y}) \ll X E_{\triv}.\end{align*}
Meanwhile, from the Ramar\'e identity, 

\begin{align*} \sum_{\substack{n \leq X \\ \mu^2_{[Y,Z)}(n) = 1 }} & f(n) F(n) \\ & = \sum_{\substack{n \leq X \\ \mu^2_{[Y,Z)}(n) = 1 }} f(n) F(n) \sum_{\substack{Y \leq p < Z \\ p | n}} w(\frac{n}{p}) + \sum_{\substack{ n \leq X \\  (n,\prod_{Y \leq p < Z}p) = 1 }}  \!\!\!\! f(n) F(n). \\ & =  \sum_{\substack{n \leq X \\ \mu^2_{[Y,Z)}(n) = 1 }} f(n) F(n) \sum_{\substack{Y \leq p < Z \\ p | n}} w(\frac{n}{p}) + \;\;\;\;O(XE_{\sieve}).\end{align*} The main business of the proof is therefore to bound the first term, which we refer to as $\Sigma$ from now on, by $O(X)(E_{\triv} + E_{\sieve} + E_{\bilinear})$.

Using the multiplicativity of $f$, $\Sigma$ may be rewritten as
\begin{align*} \sum_{Y \leq p < Z} f(p)\sum_{\substack{m \leq X/p \\ (m,p) = 1 \\ \mu^2_{[Y,Z)}(m) = 1}} w(m) f(m) F(pm) \\ = \sum_{\substack{m \leq X/Y \\ \mu^2_{[Y,Z)}(m) = 1}} w(m) f(m) \sum_{\substack{Y \leq p < Z \\ p \leq X/m \\ (m,p) = 1}} f(p) F(pm).\end{align*}

Since $w(m) \leq 1$ and $|f(m)| \leq 1$ pointwise, a very crude estimate for the contribution to $\Sigma$ from $m \leq X^{1/2}$ is $O(X^{1/2} Z \Vert F \Vert_{\infty}) \ll X E_{\triv}$. 
Split the sum over the remaining $m$ into exponential ranges $e^{-i-1}X < m \leq e^{-i}X$, where $\log Y \leq i \leq \frac{1}{2}\log X$. For notational convenience we write this range as $m \sim e^{-i}X$. On each such range we may apply the Cauchy-Schwarz inequality, obtaining a bound

\begin{align}\nonumber  \Sigma'  & \ll X E_{\triv} + \\ \nonumber &  \sum_{i =\log Y }^{\frac{1}{2}\log X} \bigg( \sum_{\substack{m \sim e^{-i}X \\ \mu^2_{[Y,Z)}(m) = 1}} w(m)^2 \bigg)^{1/2} \bigg( \sum_{\substack{m \sim e^{-i}X \\ \mu^2_{(Y,Z]}(m) = 1}} \big| \sum_{\substack{Y \leq p < Z \\ p \leq X/m \\ (m,p) = 1}} f(p) F(pm)\big|^2 \bigg)^{1/2} \\ \label{eq98a}& \ll X E_{\triv} + \frac{X^{1/2}}{\log u} \sum_{i = \log Y}^{\frac{1}{2}\log X} e^{-i/2} \bigg( \sum_{m \sim e^{-i}X} \big| \sum_{\substack{Y \leq p < Z \\ p \leq X/m \\ (m,p) = 1}} f(p) F(pm)\big|^2 \bigg)^{1/2}.\end{align} In deriving the second line here we made use of Lemma \ref{lem2.1}; this is valid since, with $i$ in the stated range, $e^{-i} X \geq X^{1/2} \geq Z^8$. Observe that positivity has allowed us to drop the condition $\mu^2_{[Y,Z)}(m) = 1$.

We have
\begin{align}\nonumber \sum_{m \sim e^{-i}X} \big| & \sum_{\substack{Y \leq p < Z \\ p \leq X/m}} f(p) F(pm)\big|^2 \\ \nonumber & = \!\!\!\! \sum_{Y \leq p, p' < \min(e^i, Z)} \!\!\!\!\! f(p) \overline{f(p')}\!\!\!\!\!\! \sum_{\substack{m \sim e^{-i}X \\ m \leq \min(X/p, X/p') \\ (m, pp') = 1}} \!\!\!\!\!\! F(pm) \overline{F(p' m)} \\ \label{eq90a} & \leq \sum_{Y \leq p, p' < \min(e^i, Z)} \big| \!\!\!\!\! \sum_{\substack{m \sim e^{-i}X \\ m \leq \min(X/p, X/p') \\ (m,pp') = 1}}\!\!\!\! F(pm) \overline{F(p' m)}\big| .\end{align}
Since $(a + b)^{1/2} \ll a^{1/2} + b^{1/2}$ for $a, b > 0$, we have, comparing with \eqref{eq98a},
\begin{align*} \Sigma & \ll X E_{\triv} + \frac{X^{1/2}}{\log u} \sum_{i = \log Y}^{\frac{1}{2}\log X} e^{-i/2}  \bigg( \sum_{Y \leq p \leq \min(e^i, Z)} \sum_{\substack{m \sim e^{-i} X \\ m \leq X/p \\ (m,p) = 1}} |F(pm)|^2\bigg)^{1/2} + \\ & \quad \frac{X^{1/2}}{\log u} \sum_{i = \log Y}^{\frac{1}{2}\log X} e^{-i/2}\bigg(\sum_{Y \leq p < p' < \min(e^i, Z)} \big| \!\!\!\!\! \sum_{\substack{m \sim e^{-i}X \\ m \leq \min(X/p, X/p') \\ (m,pp') = 1}} F(pm) \overline{F(p' m)}\big|\bigg)^{1/2} \\ & = X E_{\triv} + E_1 + E_2,\end{align*}
say.

Let us first estimate $E_1$. Rather crudely,
\begin{align*}  \sum_{Y \leq p \leq \min(e^i, Z)} \sum_{\substack{m \sim e^{-i}X \\ m \leq X/p \\ (m,p) = 1}} |F(pm)|^2 & \ll X\Vert F \Vert_{\infty}^2, \end{align*} and so
\[ E_1 \leq \frac{X\Vert F \Vert_{\infty}}{\log u} \sum_{i = \log Y}^{\frac{1}{2}\log X} e^{-i/2} \ll \frac{X}{Y^{1/2}} \Vert F \Vert_{\infty} = XE_{\triv}.\]
(We simply ignored the $\log u$ in the denominator, which will be very small compared to $Y^{1/2}$ in applications.) 
 
Next we bound $E_2$. For the portion of the sum with $i > \log(YZ)$ we use the trivial bound
\[ \frac{X^{1/2}}{\log u} \sum_{i : e^i > YZ} e^{-i/2}\big( Z^2 \cdot (e^{-i} X \Vert F \Vert_{\infty}^2 \big)^{1/2}  \ll  \frac{X\Vert F \Vert_{\infty}}{Y\log u}.\] This is bounded by $XE_{\triv}$.

Recalling the definition of $E_{\bilinear}$, for $e^i < YZ$ we have
\begin{align*} \sum_{Y \leq p < p' < \min(e^i, Z)} & \big| \!\!\!\!\sum_{\substack{m \sim e^{-i}X \\ m \leq \min(X/p, X/p')\\ (m,pp') = 1}} \!\!\!\!\!\!F(pm) \overline{F(p' m)}\big| \\ & \ll  E_{\bilinear}^2 e^{-i} X \# \{ Y < p < p' < \min(e^i, Z)\}  \\ & \ll 
 \left\{\begin{array}{ll} E_{\bilinear}^2\frac{ e^{i} X}{i^2} & \mbox{if } i \leq \log Z \\ E_{\bilinear}^2 e^{-i} X \big( \frac{Z}{\log Z}\big)^2& \mbox{otherwise}. \end{array}   \right.\end{align*}It follows that the remaining portion of $E_2$ (that is, the sum over $i < \log (YZ)$) is bounded by $E_{\bilinear}$ times
\[ \frac{X}{\log u} \sum_{i = \log Y}^{\log Z}  \frac{1}{i} + \frac{XZ}{\log u\log Z} \sum_{i = \log Z}^{\log (YZ)} e^{-i}  \ll  X.
\]
Here, we noted that 
\[ \sum_{i = \log Y}^{\log Z} \frac{1}{i} = \log \big( \frac{\log Z}{\log Y} \big) + O(1) = \log u + O(1).\]

Putting all this together concludes the proof.
\end{proof}

\section{Proof of the main theorem}

We will prove the following statement, which implies the main theorem in a very straightforward manner.

\begin{proposition}\label{prop3.1}
Suppose that $X^{1/3} < Q < X^{\frac{1}{2} + \frac{1}{78} - \sigma}$. Let $-10Q < a < 10Q$ and let $F : \N \rightarrow \C$ be any function of the following form:
\[ F(n) =  \left\{\begin{array}{ll} \sum_{Q \leq q < 2Q} \xi_q (1_{n \equiv a \mdsublem{q}} - \frac{1}{q}) & n \neq a \\ 0 & n = a\end{array}\right.\] if $n \neq a$, and $F(a) = 0$, where $(\xi_q)_{Q \leq q < 2Q}$ is a sequence of complex numbers satisfying $|\xi_q| \leq 1$ for all $q$ and $\xi_q = 0$ unless $q$ is prime. Let $f : \N \rightarrow \C$ be a multiplicative function with $|f(n)| \leq 1$ for all $n$. Let $\eps > 0$. Then 
\[ \mathbb{E}_{n \leq X} f(n) F(n) \ll  \frac{\eps}{Q}\sum_{Q \leq q < 2Q} |\xi_q| + X^{-\sigma\eps/20}.\] 
\end{proposition}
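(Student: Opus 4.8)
The plan is to apply Proposition~\ref{bilinear} with this specific $F$, so the task reduces to bounding the three error terms $E_{\triv}$, $E_{\sieve}$ and $E_{\bilinear}$ for a suitable choice of the parameters $Y < Z$. First note that $\Vert F\Vert_\infty \ll Q/Q = 1$ after using $|\xi_q|\le 1$ and the fact that for a fixed $n\neq a$ there is at most one $q\in[Q,2Q)$ with $q\mid n-a$ (since $0<|n-a|<X<Q^2$... actually $|n-a| \le X+10Q$, which may exceed $Q^2$ only marginally, but in any case at most $O(X/Q+1)$ moduli divide $n-a$, giving $\Vert F\Vert_\infty \ll X/Q^2 + 1/Q \ll 1$). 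So $E_{\triv}\ll Y^{-1/2}$, which will be acceptable if $Y$ is a small power of $X$. For $E_{\sieve}$ one uses that $F(n)$ is a sum of $1_{n\equiv a(q)}-1/q$: the ``$-1/q$'' parts sum to something tiny, and the main parts, summed over $n\le X$ coprime to $\prod_{Y\le p<Z}p$, contribute $\ll \mathbb{E}_{n\le X}1_{(n,P)=1}\cdot(\text{number of }q) + \dots$; by a standard sieve (Mertens) $\mathbb{E}_{n\le X}1_{(n,\prod_{Y\le p<Z}p)=1}\ll \frac{\log Y}{\log Z} = 1/u$, which is again small if $Z/Y$ is a fixed power. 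So the heart of the matter, as anticipated, is $E_{\bilinear}$.

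For the bilinear term I must bound, for primes $Y\le p<p'<Z$ and an interval $I\subset[0,X]$ with $\max(I)\gg X/YZ$,
\[
\frac{1}{\max(I)}\Big|\sum_{\substack{m\in I\\ (m,pp')=1}} F(pm)\overline{F(p'm)}\Big|.
\]
Expanding $F(pm)=\sum_q \xi_q(1_{pm\equiv a(q)}-1/q)$ and likewise for $F(p'm)$, and multiplying out, the main term is
\[
\sum_{q,q'} \xi_q\overline{\xi_{q'}}\sum_{\substack{m\in I\\ pm\equiv a(q),\,p'm\equiv a(q')}} 1
\]
minus the obvious correction terms. The diagonal-type contributions (where $q=q'$, or where one of the congruence conditions is replaced by its density $1/q$) can be estimated trivially and contribute to $E_{\triv}/E_{\sieve}$-sized errors after the outer sums over $i$ and $p,p'$ are performed; so the genuine object is the off-diagonal sum over $q\neq q'$. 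For fixed $q\ne q'$ the conditions $pm\equiv a(q)$ and $p'm\equiv a(q')$ pin $m$ down modulo $qq'$, so the inner count is $\max(I)/qq' + O(1)$; the ``$\max(I)/qq'$'' piece is the expected main term and cancels against the correction terms, leaving an error controlled by summing the $O(1)$'s. Naively that gives $O(Q^2)$, which is useless since $\max(I)$ can be as small as $X/YZ \ll X^{1-o(1)} \ll Q^2$. This is exactly where one needs cancellation: writing $m\equiv \bar p a\,\overline{(p'\bar p)}\cdots$ and using reciprocity $\frac{\bar q}{q'} \equiv -\frac{\bar{q'}}{q} + \frac{1}{qq'}\md 1$ to convert the congruence condition into an additive character, the sum over $m$ in the short interval $I$ becomes a sum of ``Kloosterman fractions'' $e\!\left(\frac{a\overline{pp'}\,\overline{q'}}{q}\,\text{stuff}\right)$ or similar, and one invokes the Duke--Friedlander--Iwaniec bound (in the Bettin--Chandee form) for sums $\sum e(h\bar q/q')$ over ranges of $q,q'$.

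I expect the main obstacle to be precisely this last step: organising the quadruple sum over $p,p',q,q'$ and the short variable $m$ so that it is exactly in the shape required by the Bettin--Chandee/DFI estimate, and checking that the resulting savings — roughly a power of $Q/X^{1/2}$-type size — comfortably beats the trivial bound across the whole range $Q<X^{1/2+1/78-\sigma}$. The exponent $1/78$ will come out of optimising the interplay between $Y,Z$ (which must satisfy $Z<X^{1/16}$ from Proposition~\ref{bilinear}), the length of the $m$-sum, and the quality of the Kloosterman-fraction bound; getting this bookkeeping right, including the uniformity in $a$ in the range $|a|<10Q$ and the Poisson/completion step that makes the congruence-to-character conversion legitimate for intervals rather than full periods, is the delicate part. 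The claimed bound $\ll \frac{\eps}{Q}\sum_q|\xi_q| + X^{-\sigma\eps/20}$ should then follow by separating the case where $f$ (equivalently the relevant $L$-function data) is ``pretentious'' — handled by the $\eps$-term via the main-term analysis of $\mathbb{E}_{n\le X}f(n)$ — from the generic case where $E_{\bilinear}$ saves a power and lands inside $X^{-\sigma\eps/20}$.
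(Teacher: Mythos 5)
Your high-level plan — apply Proposition~\ref{bilinear} to this specific $F$ and then estimate $E_{\triv}$, $E_{\sieve}$, $E_{\bilinear}$, with the bilinear term handled by Poisson summation, reciprocity, and Bettin--Chandee — is exactly the paper's structure. However, there are several genuine gaps.

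First, your bound on $\Vert F \Vert_\infty$ does not work in the stated range. You estimate the number of moduli $q \in [Q,2Q)$ dividing $n-a$ by $O(X/Q^2 + 1)$, but $X/Q^2$ is as large as $X^{1/3}$ near the lower end $Q \approx X^{1/3}$, so this does not give $\Vert F\Vert_\infty \ll 1$. The correct argument (and the reason the hypothesis $Q > X^{1/3}$ is there at all) is divisibility: if $n \neq a$ and $q_1, q_2, q_3$ are three distinct primes in $[Q,2Q)$ with $q_i \mid (n-a)$, then $q_1q_2q_3 > X^{1}$ divides the nonzero integer $n-a$ with $|n-a| < 2X$, which is impossible. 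So at most two of the indicators $1_{n\equiv a \mdsublem{q}}$ are nonzero, giving $\Vert F\Vert_\infty \leq 2 + O(1/\log Q) \ll 1$.

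Second, and more seriously, your closing paragraph misdescribes where the two terms in the bound come from. There is no dichotomy between ``pretentious'' and ``non-pretentious'' $f$ anywhere in the proof, and no main-term analysis of $\mathbb{E}_{n\leq X} f(n)$. The parameter $\eps$ enters solely through the choice of $Y$ and $Z$: the paper takes $Y = X^{\eps\sigma/4}$ and $Z = X^{\sigma/4}$, so that $\log Y / \log Z = \eps$. The fundamental-lemma sieve bound then gives $E_{\sieve} \ll \frac{\log Y}{\log Z}\cdot \frac{1}{Q}\sum_q |\xi_q| = \frac{\eps}{Q}\sum_q|\xi_q|$, which is the first term; and $E_{\triv} \ll Y^{-1/2} = X^{-\eps\sigma/8}$, which (together with the $\eps$-independent power saving from $E_{\bilinear}$) is absorbed into $X^{-\sigma\eps/20}$. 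You never specify $Y, Z$, and without this choice neither term of the conclusion is obtainable. This is not a cosmetic omission: the entire point of the $\eps$ in the Proposition is that it is a free parameter controlling the trade-off between sieve saving and trivial-term loss, not a feature of $f$.

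Third, you openly defer the $E_{\bilinear}$ estimate (``I expect the main obstacle to be precisely this last step''), but this \emph{is} the proof. In the paper, after removing the $(m,pp')=1$ condition and smoothing the interval cutoff, one expands $F(pm)\overline{F(p'm)}$, identifies the off-diagonal $q\neq q'$ contribution via the CRT as a single congruence $m \equiv r(q,q') \mdlem{qq'}$, Poisson-sums, truncates the dual $h$-sum, applies a separation-of-variables trick to the resulting $\hat W$ factor, and only then performs the reciprocity step to reach a genuine Kloosterman-fraction bilinear form $\sum_{m,n}\tilde\alpha(m)\tilde\beta(n)e_n(b m^{-1})$ to which the Bettin--Chandee bound applies; the exponent $1/78$ falls out of balancing $Q^{2-1/20+o(1)}Z$ against $X^{-2\eta - \sigma/2}$ with $Q = X^{1/2+\eta}$, $Z = X^{\sigma/4}$. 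Your sketch gestures at reciprocity and DFI but leaves out the smoothing, the Poisson truncation, the separation-of-variables, and the final numerology — all of which are where the exponent actually comes from.

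In short: same strategy, but the $\Vert F\Vert_\infty$ argument is wrong as stated, the source of the $\eps$ in the conclusion is misattributed, and the core bilinear estimate is not carried out.
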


Let us see how our main theorem, Theorem \ref{thm2.3}, follows from this. The remainder of the paper will then be devoted to the proof of Proposition \ref{prop3.1}.\vspace{8pt}

\emph{Deduction of Theorem \ref{thm2.3}.} Suppose that there is a set $S \subset [Q,2Q]$ of primes such that 
\[ |\mathbb{E}_{x \leq X, x \equiv a \mdsub{q}} f(x) - \mathbb{E}_{x \leq X} f(x)| \geq \eps\] for all $q \in S$. For $q \in S$, choose unit-modulus complex numbers $\xi_q$ such that 
\[ \xi_q \big(\mathbb{E}_{x \leq X, x \equiv a \mdsub{q}} f(x) - \mathbb{E}_{x \leq X} f(x)\big) \geq \eps.\] For $q \notin S$, set $\xi_q = 0$. Then, taking $F(n)$ as in the statement of Proposition \ref{prop3.1} (with this choice of $\xi_q$) we have
\begin{equation}\label{to-comp} \mathbb{E}_{n \leq X} f(n) F(n) \geq \frac{\eps }{2} \sum_{q \in S} \frac{1}{q} \geq \frac{\eps}{4} \frac{\# S}{Q}.\end{equation} Here, we used the fact that $q/X$ is much smaller than $\eps$; we may certainly assume this since Proposition \ref{prop3.1} has no content when $\eps < \frac{1}{\log X}$.

However, Proposition \ref{prop3.1} provides the upper bound
\[ \mathbb{E}_{n \leq X} f(n) F(n) \ll \frac{\eps'}{Q}\# S + X^{-\sigma \eps' /20}.\]
Taking $\eps' = c\eps$ for a suitably small constant $c$, it follows that 
\[ X^{-\sigma \eps'/20} \gg \frac{\eps}{Q} \# S.\]
This concludes the deduction of Proposition \ref{prop3.1} from Theorem \ref{thm2.3}. \vspace{11pt}

\emph{Proof of Proposition \ref{prop3.1}.} We apply Proposition \ref{bilinear}, taking $Y = X^{\eps\sigma/4}$ and $Z = X^{\sigma/4}$.  \vspace{8pt}

\emph{Estimation of $E_{\triv}$.} Note that $\Vert F \Vert_{\infty} \leq 2$, since all the primes $q$ are $> X^{1/3}$ and so if $n \equiv a \md{q}$ for at least $3$ different $q$ then $n = a$. Thus the contribution of $E_{\triv}$ is $\ll Y^{-1/2} = X^{-\eps\sigma/8}$, which is one of the terms in the statement of Proposition \ref{prop3.1}.\vspace{8pt}

\emph{Estimation of $E_{\sieve}$.} We use the fact that 
\begin{align} \nonumber \# \{ n \leq X : n \equiv a \md{q}, (n, \prod_{Y \leq p < Z} p) = 1\}  & \ll\frac{X}{q} \prod_{Y \leq p < Z} (1 - \frac{1}{p}) \\ & \ll  \frac{\log Y}{\log Z} \frac{X}{q},\label{sieve}\end{align} uniformly for $1 < Y < Z < X^{1/10}$, for $q < X^{3/4}$ and for all $a$. Such an estimate is a consequence of the fundamental lemma of the combinatorial sieve.

By the triangle inequality
\begin{align*}
XE_{\sieve} & = \sum_{n \leq X} |F(n)| 1_{(n, \prod_{Y \leq p < Z} p) = 1} \\ & \leq \sum_{Q \leq q < 2Q} |\xi_q|\#\{n \leq X : n \equiv a \md{q}, (n, \prod_{Y \leq p < Z} p) = 1\}  \\ & \qquad + \sum_{Q \leq q < 2Q} \frac{|\xi_q|}{q} \#\{n \leq X : (n, \prod_{Y \leq p < Z} p) = 1\}.
\end{align*}
By \eqref{sieve}, this is bounded by 
\[ \frac{X\log Y}{\log Z}\sum_{Q \leq q < 2Q} \frac{|\xi_q|}{q}.\]
With our choice of $Y, Z$, it follows that 
\[ E_{\sieve} \leq \frac{\eps}{Q} \sum_{Q \leq q < 2Q} |\xi_q|.\]
The right-hand side is one of the terms in the statement of Proposition \ref{prop3.1}.\vspace{8pt}

\emph{Estimation of $E_{\bilinear}$.} This is the heart of the matter. It is enough to show that 
\begin{equation}\label{bil-est} \sum_{\substack{m \in I \\ (m,pp') = 1}} F(pm) F(p' m) \ll L X^{-\eps\sigma/10}\end{equation} whenever $I$ is a subinterval of $[0, L]$, whenever $p \neq p'$ are distinct primes with $Y \leq p < p' < Z$, and for all $L$ with $X^{1 - \sigma/2} \leq L \leq X$ (in the notation of Proposition \ref{bilinear}, $L = e^{-i} X$, and we actually need the estimate for $L \gg X/YZ$; however, $Y \ll Z = X^{\sigma/4}$.).

It is convenient to remove the condition $(m,pp') = 1$. The contribution to the left-hand side of \eqref{bil-est} from $m$ not satisfying this condition is $\ll L/Y$, which is certainly acceptable. 

For the remaining sum we will in fact show the stronger estimate
\begin{equation}\label{bil-est-2} \sum_m 1_J(\frac{m}{L}) F(pm) F(p'm) \ll LX^{-\sigma/10},\end{equation} where $J \subset [0,1]$ is a subinterval of $\R$.

The next step, completely routine in considerations of this type, is to replace the cutoff $1_J$ by a smooth variant. Set
\[ W(x) = \int 1_J(y) X^{\sigma/10}\Psi(X^{\sigma/10} (x -y))dy,\] where $\Psi \in C_0^{\infty}(\R)$ has $\Psi \geq 0$, $\mbox{Supp}(\Psi) \subset [-1,1]$, $\int \Psi = 1$. Then $W = 1_J$ outside a union of two intervals of measure $O(X^{-\sigma/10})$, and so it suffices to show that 
\begin{equation}\label{smoothed-sum} \sum_m W(\frac{m}{L}) F(pm) F(p'm) \ll LX^{- \sigma/10}.\end{equation}
Let $A \geq 2$ be an integer. Noting that
\[ \Vert W^{(A)}\Vert_{\infty} \leq X^{A\sigma/10} \Vert \Psi^{(A)} \Vert_1 \ll_A X^{A \sigma/10} \] and that $W$ is constant outside of the union of two intervals of measure $O(X^{-\sigma/10})$, we have the derivative bound $\Vert W^{(A)} \Vert_1 \ll X^{(A - 1)\sigma/10}$. Therefore by partial integration we have the Fourier bound
\begin{equation}\label{fourier-w} |\hat{W}(\xi)| \ll_A |\xi|^{-A}
\Vert W^{(A)} \Vert_1  \ll_A |\xi|^{-A} X^{(A - 1)\sigma/10}.\end{equation}
To proceed further towards \eqref{smoothed-sum} we expand out the definition of $F$, reducing the task to proving 
\begin{align*} \sum_m W(\frac{m}{L}) \sum_{Q \leq q,q' \leq 2Q} \xi_q \xi_{q'}(1_{pm \equiv a \mdsub{q}} - \frac{1}{q})&(1_{p'm \equiv a\mdsub{q'}} - \frac{1}{q'}) \\ & \ll LX^{ - \sigma/10}\end{align*} for any choice of $\xi_q$, $|\xi_q| \leq 1$, $\xi_q = 0$ unless $q$ is prime. Using the identity $(a - a')(b - b') = -(ab - a'b') + a(b - b') + b(a - a')$ this may be further split into the following subtasks:

\begin{equation}\label{subtask-1} \sum_{q'} \frac{\xi_{q'}}{q'} \sum_m W(\frac{m}{L})\sum_q \xi_q (1_{pm \equiv a \mdsub{q}} - \frac{1}{q}) \ll LX^{- \sigma/10} \end{equation}
\begin{equation}\label{subtask-2} \sum_{q} \frac{\xi_{q}}{q} \sum_m W(\frac{m}{L})\sum_{q'} \xi_{q'} (1_{p'm \equiv a \mdsub{q'}} - \frac{1}{q'}) \ll LX^{ - \sigma/10} \end{equation} and
\begin{equation}\label{subtask-3}  \sum_{q,q'}\sum_m W(\frac{m}{L}) \xi_q\xi_{q'} (1_{pm \equiv a \mdsub{q}}1_{p'm \equiv a \mdsub{q'}} - \frac{1}{qq'}) \ll LX^{- \sigma/10} \end{equation}
Of these, \eqref{subtask-1} and \eqref{subtask-2} are equivalent and so we need only prove one of them, say \eqref{subtask-1}; since $\sum_{q'} \frac{\xi_{q'}}{q'} = O(1)$, it is enough to prove that
\begin{equation}\label{subtask-4}  \sum_q | \sum_m W(\frac{m}{L}) (1_{pm \equiv a \mdsub{q}} - \frac{1}{q})| \ll LX^{- \sigma/10}. \end{equation}
Thus \eqref{subtask-3} and \eqref{subtask-4} are our remaining tasks. The first step in establishing both of them is an application of the Poisson summation formula. In the case of \eqref{subtask-4}, this is essentially also the last step. By contrast, \eqref{subtask-3} lies deeper.

The Poisson summation formula $\sum_{n \in \Z} \phi(n) = \sum_{h \in \Z} \hat{\phi}(2\pi h)$ applied with $\phi(x) = W(\frac{dx + b}{L})$ gives
\begin{equation}\label{pois} \sum_m W(\frac{m}{L})(1_{m \equiv b \md{d}} - \frac{1}{d} ) = \frac{L}{d} \sum_{h \neq 0} \hat{W}(\frac{2\pi Lh}{d}) e(\frac{bh}{d})\end{equation}
To prove \eqref{subtask-4}, we can proceed with rather crude bounds: using \eqref{fourier-w} with $A = 2$ we have
\begin{align*}
\sum_q | \sum_m W(\frac{m}{L}) (1_{pm \equiv a \mdsub{q}} - \frac{1}{q})|   \leq \sum_q \frac{L}{q} \sum_{h \neq 0} |\hat{W}(\frac{2\pi Lh}{q})| \\ \ll \sum_q \frac{L}{q} \sum_{h \neq 0} X^{\kappa} |\frac{L h}{q}|^{-2} \ll Q^2 X^{\sigma/10}L^{ - 1} \ll L X^{-\sigma/10}
\end{align*}
provided that $Q < X^{1 - \sigma}$.
This establishes \eqref{subtask-4}. Turning to \eqref{subtask-3}, a similarly blunt approach would lead only to a corresponding bound under the much stronger condition $Q^2 \leq X^{1 - O(\sigma)}$, which excludes any possibility of working with $Q > X^{1/2}$. To access this range we must exploit cancellation coming from the phases $e(\frac{bh}{d})$ in \eqref{pois}.

Let us turn to the details (of bounding \eqref{subtask-3}). Let us first make the trivial observation that the contribution from $q = q'$ is negligible. For the remaining pairs $q \neq q'$, the Chinese remainder theorem of course tells us that there is a unique residue class $r(q,q') \in \Z/qq' \Z$ such that $p r(q,q') \equiv a \md{q}$, $p' r(q,q') \equiv a \md{q'}$. The task is then to show that 
\begin{equation}\label{subtask-4prime} \sum_{q \neq q'} \xi_q \xi_{q'}\sum_m W(\frac{m}{L}) (1_{m \equiv r(q,q') \mdsub{q q'}} - \frac{1}{q q'}) \ll LX^{ - \sigma/10}.\end{equation} By Poisson summation, this follows from 
\[\sum_{h \neq 0} \sum_{q \neq q'} \frac{\xi_q \xi_{q'}}{qq'}  \hat{W}(\frac{2\pi L h}{q q'}) e(\frac{r(q, q') h}{q q'}) \ll X^{- \sigma/10}.\]
We bound the contribution from ``large'' $h$ trivially using \eqref{fourier-w}:
\begin{align*}
 \sum_{|h| > H} \sum_{q \neq q'} \frac{\xi_q \xi_{q'}}{q q'} \hat{W}(\frac{2\pi L h}{q q'}) e(\frac{r(q, q') h}{q q'})  \ll \sum_{|h| > H} \sum_{q \neq q'} \frac{1}{q q'} |\hat{W}(\frac{2\pi Lh}{q q'})| \\  \ll \sum_{q \neq q'} \frac{1}{q q'} X^{(A - 1)\kappa} \sum_{|h| > H} |\frac{Lh}{q q'}|^{-A} \ll X^{(A - 1)\sigma/10} Q^{2A} L^{ - A} H^{1 - A}.
\end{align*}
If $Q = X^{\frac{1}{2} + \eta}$ then one may compute that, with the choice $A = \lceil 100/\sigma\rceil$ and $H = X^{2\eta + 2\sigma/5}$, this contribution is bounded by $L X^{-\sigma/10}$ as required. It is thus enough to show that
\begin{equation}\label{to-show-45} \sum_{q \neq q'} \frac{\xi_q \xi_{q'}}{q q'} \hat{W}(\frac{2\pi L h}{q q'}) e(\frac{r(q, q') h}{q q'}) \ll X^{- 2\eta - \sigma/2}.\end{equation}
uniformly in $0 < h \leq H$. (The reader should have in mind that $H \sim X^{2\eta} = Q^2/X$, for some rough sense of the symbol $\sim$. Note that there is nothing to prove if $\eta < 0$.)

We now use a devious separation of variables trick from \cite[p267]{opera-cribro}. By a change of variables in the definition of the Fourier transform $\hat{W}$, we have
\[ \hat{W}(\frac{2\pi L h}{qq'}) = q \int_{|u| \leq 10/q} W(qu) e(-\frac{Luh}{q'}) du,\] and so the left-hand side of \eqref{to-show-45} is equal to
\begin{equation}\label{to-show-46} Q \int_{|u| \leq 10/q} du \bigg(\frac{1}{Q^2}\sum_{\substack{Q \leq q < 2Q \\ q \neq q'}} \alpha_u(q) \beta_{u,h}(q') e(\frac{r(q,q')h}{q q'} )  \bigg),\end{equation} where
\[ \alpha_u(q) = \xi_q W(qu)\] and
\[ \beta_{u,h}(q') := \frac{Q\xi_{q'}}{q'} e(-\frac{Lu h}{q'}).\]
The scalars $\alpha_u(q), \beta_{u,h}(q')$ are essentially arbitrary bounded functions of $q,q'$. Thus we do indeed choose to forget their precise form, thereby reducing matters to establishing the bilinear form estimate
\begin{equation}\label{to-show-47} \sum_{\substack{Q \leq q < 2Q \\ q \neq q'}} \alpha(q) \beta(q') e_{qq'}(-r(q,q') h) \ll X^{-2\eta - \sigma/2}\end{equation} for all choices of $\alpha(q), \beta(q')$ with $|\alpha(q)| \leq 1$, $|\beta(q')| \leq 1$, and uniformly for $h < H = X^{2\eta + 2\sigma/5}$. Here, and below, we have written $e_m(x)$ as a shorthand for $e(\frac{x}{m}) = e^{2\pi i x/m}$.

To proceed further we must be more explicit about $r(q,q')$ which, recall, is the solution to the simultaneous congruences \[ p r(q, q') \equiv a \md{q},\] \[ p' r(q,q') \equiv a \md{q'}.\] Note that \[ r(q,q') = a(p q')^{-1} \md{q} q' + a(p' q)^{-1}\md{q'} q,\] 
and so 
\begin{align} \nonumber e_{q q'}(h r(q, q')) & = e_{q} (ah (p q')^{-1} \md{q}) e_{q'} (ah (p' q)^{-1} \md{q'}) \\ & = \label{star-14} e_{p'q} (ah p' (p q')^{-1} \md{q}) e_{pq'} (ah p (p' q)^{-1} \md{q'}). \end{align}

Now we note the ``reciprocity relation''

\[ \frac{v^{-1}\md{u}}{u}  + \frac{u^{-1}\md{v}}{v}  \equiv \frac{1}{uv} \md{1},\]
which means that 
\[ e_u(v^{-1}\md u) = e_v(-u^{-1}\md{v} )e^{2\pi i /uv}.\]
Applying with $u = pq'$ and $v = p'q$ gives
\[ e_{pq'}(ahp (p' q)^{-1} \md{pq'}) = e_{p'q}(-ah p (p q')^{-1} \md{p' q}) (1 + O(\frac{|ah|}{Q^2})),\] 
and so from \eqref{star-14} 
\[  e_{q q'}(-h r(q,q')) = e_{p' q}(ah (p - p') (p q')^{-1} \md{p' q}) (1 + O(\frac{|ah|}{Q^2})).\]
Since $|a| \ll Q$, $|h| \lll X^{1/6}$, the error term is negligible for the purposes of establishing \eqref{to-show-47}. Therefore we see that it is now enough to establish
\begin{equation}\label{enough-to-show-8}
\sum_{q \neq q'} \alpha(q) \beta(q') e_{p' q}(ah (p - p')(pq')^{-1}\md{p' q}) \ll X^{-2\eta - \sigma/2}.
\end{equation}
Writing $m = pq'$, $n = p' q$, $b = ah(p' - p)$, $\tilde\alpha(m) = \alpha(\frac{m}{p})$ when $m/p$ is a prime in $[Q,2Q]$ and $0$ otherwise, $\tilde\beta(n) = \beta(\frac{n}{p'})$ when $n/p'$ is a prime in $[Q,2Q]$ and 0 otherwise, this takes the form
\[ \Sigma := \sum_{\substack{pQ \leq m < 2pQ \\ p'Q \leq n < 2p' Q}} \tilde\alpha(m) \tilde\beta(n) e_n(b m^{-1} \md{n}).\] 
Nontrivial bounds for bilinear forms of this type were given by Duke, Friedlander and Iwaniec \cite{dfi}. A much more recent paper by Bettin and Chandee \cite{bettin-chandee} gives a somewhat superior bound (albeit using a similar method). Their bound (\cite[Theorem 1]{bettin-chandee}, taking in their notation $A = 1$, $\theta = b$, $M = pQ$, $N = p' Q$) gives, recalling that $p, p' \leq Z$, the bound
\[ \Sigma \ll Q^{2 - \frac{1}{20} + o(1)} Z.\] It can be checked that this is indeed bounded by $X^{-2\eta - \sigma/2}$ (as required by \eqref{to-show-47}) provided that $\eta \leq \frac{1}{78} - \sigma$.

\section{On allowing the residue class to vary}\label{var-cong-sec}

Suppose that $Q > CX^{1/2}$. Our results required a fixed residue class $a\md{q}$. If the residue class is allowed to depend on $q$, the problem appears to be vastly more difficult. Let us imagine taking a similar approach. Then, even in the case $\xi_q = 1$ for $q$ prime in Proposition \ref{prop3.1}, one would be led to bilinear forms of the type
\begin{equation}\label{eq111} \sum_{Q \leq q, q' < 2Q}\sum_{m \leq X} ( 1_{pm \equiv a(q) \md{q}} 1_{p' m \equiv a(q') \mdsub{q'}} - \frac{1}{qq'}),\end{equation}
and one would be seeking a bound of $o(X)$. 
Now suppose that $a(q) = p$ for $q \in S$, and that $a(q') = p'$ for $q' \in S'$, where $S, S'$ are disjoint sets, each consisting of half the primes in $[Q,2Q]$. Then one may check that \eqref{eq111} is $\gg Q^2$, the point being that if $q \in S$ and $q' \in S'$ then the unique solution $\md{qq'}$ to $pm \equiv a(q) \md{q}$ and $p' m \equiv a(q') \md{q'}$ is $ m = 1$, which automatically lies in $\{1,\dots, X\}$.

Thus to make progress, even in the special case $\xi_q = 1$, one would need a different mode of argument exploiting some averaging in $p, p'$, perhaps.

The following, say for $Q = X^{1/2 + \delta}$ for very small $\delta$, is an easier problem to which we do not know the solution. Suppose that for each prime $q \in [Q,2Q]$ we take a residue class $a(q) \md{q}$. Let $A$ be the union of all these residue classes, intersected with $\{1,\dots,X\}$. Is $\# A \gg X^{1 - o(1)}$? The connection between this problem and the distribution of multiplicative functions on progressions is discussed in \cite{green-ruzsa}. It is somewhat reminiscent of the Kakeya problem in Euclidean harmonic analysis and indeed implies it as $\delta \rightarrow \frac{1}{2}$. For more details see \cite{green-ruzsa}.

\end{document}